\renewcommand{\thesubfigure}{\thefigure.\arabic{subfigure}}
\renewcommand{\p@subfigure}{}
\renewcommand{\@thesubfigure}{\thesubfigure:\hskip\subfiglabelskip}
\DeclareFontFamily{U}{tipa}{}
\DeclareFontShape{U}{tipa}{bx}{n}{<->tipabx10}{}
\newcommand{\arc@char}{{\usefont{U}{tipa}{bx}{n}\symbol{62}}}%
\newcommand{\arc}[1]{\mathpalette\arc@arc{#1}}
\newcommand{\arc@arc}[2]{%
  \sbox0{$\m@th#1#2$}%
  \vbox{
    \hbox{\resizebox{\wd0}{\height}{\arc@char}}
    \nointerlineskip
    \box0
  }%
}
\newcommand{\doublewedge}{\big@doubleop{\wedge}}
\newcommand{\big@doubleop}[1]{%
  \DOTSB\mathop{\mathpalette\big@doubleop@aux{#1}}\slimits@
}
\newcommand\big@doubleop@aux[2]{%
  \sbox\z@{$\m@th#1#2$}%
  \makebox[1.35\wd\z@][s]{$\m@th#1#2\hss#2$}%
}
\newcommand{\cl}{\mbox{cl}}
\newcommand{\Int}{\mbox{int}}
\newcommand{\bdy}{\mbox{bdy}}
\newcommand{\Nrv}{\mbox{Nrv}}
\newcommand{\dnear}{\delta_{\Phi}} 
\newcommand{\dcap}{\mathop{\cap}\limits_{\Phi}} 
\newcommand{\sh}{\mbox{sh}}
\newcommand{\cx}{\mbox{cx}}
\newcommand{\cyc}{\mbox{cyc}}
\newcommand{\vcyc}{\mbox{vcyc}}
\newcommand{\vtex}{\mbox{vtex}}
\newcommand{\vNrv}{\mbox{vNrv}}
\newcommand{\dfar}{{\not\delta}_{\Phi}} 
\newcommand{\connCyc}{\mbox{conn}} 
\newtheorem{example}{Example}
\newtheorem{definition}{Definition}
\newtheorem{lemma}{Lemma}
\newtheorem{theorem}{Theorem}
\newtheorem{corollary}{Corollary}
\definecolor{light}{gray}{0.80}
\begin{document}

\title[Vortex Nerves]{Vortex Nerves and their Proximities.\\   Nerve Betti Numbers and Descriptive Proximity}

\author[James F. Peters]{James F. Peters}
\address{
Computational Intelligence Laboratory,
University of Manitoba, WPG, MB, R3T 5V6, Canada and
Department of Mathematics, Faculty of Arts and Sciences, Ad\.{i}yaman University, 02040 Ad\.{i}yaman, Turkey}
\thanks{The research has been supported by the Natural Sciences \&
Engineering Research Council of Canada (NSERC) discovery grant 185986 
and Instituto Nazionale di Alta Matematica (INdAM) Francesco Severi, Gruppo Nazionale per le Strutture Algebriche, Geometriche e Loro Applicazioni grant 9 920160 000362, n.prot U 2016/000036 and Scientific and Technological Research Council of Turkey (T\"{U}B\.{I}TAK) Scientific Human
Resources Development (BIDEB) under grant no: 2221-1059B211301223.}

\subjclass[2010]{54E05 (Proximity); 55R40 (Homology); 68U05 (Computational Geometry)}

\date{}

\dedicatory{Dedicated to  Enrico Betti and Som Naimpally}

\begin{abstract}
This article introduces vortex nerve complexes in CW (Closure finite Weak) topological spaces, which first appeared in works by P. Alexandroff, H. Hopf and J.H.C. Whitehead during the 1930s.    A \emph{vortex nerve} is a CW complex containing one or more intersecting path-connected cycles.   Each vortex nerve has its own distinctive shape.    Both vortex nerve shapes (bounded planar surfaces with nonempty interior) and holes (bounded planar surfaces with empty interior that live inside and define shapes) have boundaries that are path-connected cycles.   In the context of CW complexes, the usual Betti numbers $\mathcal{B}_0$ (cell count), $\mathcal{B}_1$ (cycle count) and $\mathcal{B}_2$ (hole count) provide a basis for the introduction of several new Betti numbers, namely, vortex $\mathcal{B}_{vtex}$, vortex nerve $\mathcal{B}_{vNrv}$ and shape $\mathcal{B}_{sh}$ introduced in this paper.   In addition, results are given for CW complexes equipped with a descriptive proximity as well as for the homotopy types of vortex nerves and the complexes and cycles contained in the nerves.   
\end{abstract}
\keywords{Betti Number, CW complex, Cycle, Hole, Homotopy Type, Descriptive Proximity, Shape, Vortex Nerve}

\maketitle
\tableofcontents

\section{Introduction}
This paper introduces vortex nerve complexes in a CW topological space $K$.    A \emph{cell complex} is a nonempty collection of cells attached to each other in a Hausdorff space.   A \emph{cell} in the Euclidean plane is either a 0-cell (vertex) or 1-cell (edge) or 2-cell (filled triangle).   A nonvoid collection of cell complexes $K$ has a \emph{Closure finite Weak} (CW) topology, provided $K$ is Hausdorff (every pair of distinct cells is contained in disjoint neighbourhoods~\cite[\S 5.1, p. 94]{Naimpally2013}) and the collection of cell complexes in $K$ satisfy the Alexandroff-Hopf-Whitehead~\cite[\S III, starting on page 124]{AlexandroffHopf1935Topologie},~\cite[pp. 315-317]{Whitehead1939homotopy}, ~\cite[\S 5, p. 223]{Whitehead1949BAMS-CWtopology} containment and intersection conditions
\begin{description}
\item [\bf CW containment condition]  The closure of each cell complex $E$ (denoted by $\cl(\cx E)$) is a member of $K$.   The \emph{closure} $\cl(\cx E)$ equals the set of all cells on the boundary as well as in the interior of complex $\cx E$.
\item [\bf CW intersection condition]  The intersection of any two bounded cell complexes in $K$ is also in $K$.
\end{description}

Although not considered here, it also possible to work through the results for vortex nerves in a Leseberg bounded topological space~\cite{Leseberg2014QMimprovedNearness,Leseberg2013QMboundedProximities}, since every vortex nerve is a nonempty bounded set in a topological space of bounded sets~\cite{Leseberg2009FUboundedTopology}.   Each vortex cycle $A$ (denoted by $\vcyc A$ (briefly, vortex $\cyc A$)) is a collection of nesting, path-connected, filled cycles.    This paper continues the work on nerve complexes introduced in~\cite{Peters2018AMSBshapeComplexes}.

\begin{figure}[!ht]
\centering
\begin{pspicture}
(-1.5,-1.0)(8.0,2.5)
\psline*[linestyle=solid,linecolor=gray!10]%
(2,2)(3,1.5)(3,0.5)(2,0)(1,0.5)(1,1.5)(2,2)
\psline[linestyle=solid](2,2)(3,1.5)\psline[linestyle=solid](3,1.5)(3,0.5)
\psline[linestyle=solid](3,0.5)(2,0)\psline[linestyle=solid](2,0)(1,0.5)
\psline[linestyle=solid](1,1.5)(2,2)\psline[arrows=<->](-1,1.7)(-0.3,2.0)\psline[arrows=<->](0.2,2.05)(0.8,1.75)
\psline[linestyle=solid](0,2)(1,1.5)\psline[linestyle=solid](0,2)(-1,1.5)
\psline[linestyle=solid](-1,1.5)(-1,0.5)\psline[linestyle=solid](-1,0.5)(0,0)\psline[linestyle=solid](0,0)(1,0.5)
\psline[arrows=<->](1.2,1.75)(1.8,2.05)\psline[arrows=<->](2.2,2.05)(2.8,1.75)
\psdots[dotstyle=o, dotsize=2.6pt,linewidth=1.2pt,linecolor = black, fillcolor = yellow]%
(2,2)(3,1.5)(3,0.5)(2,0)(1,0.5)(1,1.5)(0,2)(1,1.5)(-1,1.5)(-1,0.5)(0,0)
\psline[linestyle=solid]{<->}(1,0.5)(1,1.5)
\psline[linestyle=solid](6,2)(7,1.5)\psline[linestyle=solid,arrows=<->](5,1.5)(6,2)
\psline[linestyle=solid](5,1.5)(5,0.5)\psline[linestyle=solid](5,0.5)(6,0.0)
\psline[linestyle=solid](6,0.0)(7,0.5)\psline[linestyle=solid](7,0.5)(7,1.5)
\psline*[linestyle=solid,linecolor=gray!10]%
(6,1.5)(6.5,1.3)(6.5,0.7)(6,0.5)(5.5,0.7)(5.5,1.3)(6,1.5)
\psline*[linestyle=solid,linecolor=white!90]%
(6,1.35)(6.35,1.23)(6.25,0.8)(6,0.8)(6,1.35)
\psline[linestyle=solid, fillcolor = gray!30](6,1.35)(6.35,1.23)
\psline[linestyle=solid, fillcolor = gray!30](6.35,1.23)(6.25,0.8)
\psline[linestyle=solid, fillcolor = gray!30](6.25,0.8)(6,0.8)
\psline[linestyle=solid, fillcolor = gray!30](6,0.8)(6,1.35)
\psdots[dotstyle=o, dotsize=1.6pt,linewidth=1.2pt,linecolor = black, fillcolor = gray!80]%
(6,1.35)(6.35,1.23)(6.25,0.8)(6,0.8)(6,1.35)
\psline[linestyle=solid](6,1.5)(6.5,1.3)\psline[linestyle=solid,arrows=<->](6,1.5)(5.5,1.3)
\psline[linestyle=solid](6.5,1.3)(6.5,0.7)\psline[linestyle=solid](5.5,1.3)(5.5,0.7)
\psline[linestyle=solid](6.5,0.7)(6,0.5)\psline[linestyle=solid](5.5,0.7)(6,0.5)
\psline[arrows=<->,linestyle=solid,linecolor=red](5,1.5)(6.0,1.5)
\psline[linestyle=solid](6.0,2.8)(7.3,1.5)\psline[linestyle=solid,arrows=<->](4.7,1.5)(6.0,2.8)
\psline[linestyle=solid](4.7,1.5)(4.7,0.5)\psline[linestyle=solid](7.3,1.5)(7.3,0.5)
\psline[linestyle=solid](4.7,0.5)(6,-0.3)\psline[linestyle=solid](6,-0.3)(7.3,0.5)
\psline[arrows=<->,linestyle=solid,linecolor=red](6,1.5)(6.0,2.8)
\psdots[dotstyle=o, dotsize=2.6pt,linewidth=1.2pt,linecolor = black, fillcolor = yellow]%
(6,2)(7,1.5)(5,1.5)(5,0.5)(6,0.0)(7,0.5)(6,1.5)(6.5,1.3)(6.5,0.7)(5.5,0.7)(6,0.5)
(6.0,2.8)(7.3,1.5)(4.7,1.5)(4.7,0.5)(7.3,0.5)(4.7,0.5)(6,-0.3)(7.3,0.5)(5.5,1.3)
\rput(6,0.7){\footnotesize $\boldsymbol{cycA}$}\rput(5.8,1){\footnotesize $\boldsymbol{\langle a\rangle}$}
\rput(6.0,0.3){\footnotesize $\boldsymbol{cycB}$}\rput(6.5,0.4){\footnotesize $\boldsymbol{\langle b\rangle}$}
\rput(6.0,-0.5){\footnotesize $\boldsymbol{cycC}$}\rput(6.5,-0.2){\footnotesize $\boldsymbol{\langle c\rangle}$}
\rput(5.52,1.55){\footnotesize $\boldsymbol{\langle e_1\rangle}$}
\rput(6.27,2.26){\footnotesize $\boldsymbol{\langle e_0\rangle}$}
\rput(2.0,2.2){\footnotesize $\boldsymbol{0a}$}\rput(3.3,1.5){\footnotesize $\boldsymbol{1a}$}
\rput(3.3,0.5){\footnotesize $\boldsymbol{2a}$}
\rput(2,-0.2){\footnotesize $\boldsymbol{3a}$}\rput(1.13,0.7){\footnotesize $\boldsymbol{4b=4a}$}
\rput(1.13,1.4){\footnotesize $\boldsymbol{5b=5a}$}
\rput(0,-0.2){\footnotesize $\boldsymbol{3b}$}\rput(-1.3,0.5){\footnotesize $\boldsymbol{2b}$}
\rput(-1.3,1.5){\footnotesize $\boldsymbol{1b}$}
\rput(0.0,2.2){\footnotesize $\boldsymbol{0b}$}\rput(0.0,1){\footnotesize $\boldsymbol{cycB}$}
\rput(0.0,1.3){\footnotesize $\boldsymbol{\langle b\rangle}$}
\rput(2.0,1){\footnotesize $\boldsymbol{cycA}$}\rput(2.0,1.3){\footnotesize $\boldsymbol{\langle a\rangle}$}
\rput(1.0,-0.8){\footnotesize $\boldsymbol{(i)}$}\rput(6.0,-0.8){\footnotesize $\boldsymbol{(ii)}$}
\end{pspicture}
\caption[]{(i) Intersecting cycles $\cyc A, \cyc B$, $\cyc A\cap \cyc B$, (ii) cycles in a vortex nerve complex\\ 
$\vNrv E = \overbrace{\connCyc\left\{\mathop{\bigcup}\limits_{\cyc G\in \left\{\cyc A, \cyc B, \cyc C, \cyc e_0, \cyc e_1\right\}}\cyc G\right\}.}^{\mbox{\textcolor{blue}{\bf Path-connected cycles in a vortex nerve}}}$\\
}
\label{fig:vortexCycles}
\end{figure}

A \emph{planar vortex nerve} is a nonempty collection of intersecting filled cycles.   A \emph{planar filled cycle} $A$ (denoted by $\cyc A$) is a bounded region of the plane containing a non-void finite, collection $E$ of path-connected vertices so that there is a path between any pair of vertices in $\cyc A$.   There are two types of cycles, namely, shape boundary cycle and hole boundary cycles.   Each \emph{planar shape} is a finite, bounded region covered by a filled cycle.   A planar shape $\sh A$ has path-connected vertices on its boundary (denoted by $\bdy(\sh A)$) and a nonempty interior (denoted by $\Int(\sh A)$) that is either partially or completely filled.  A partially filled shape interior contains holes.  A \emph{planar hole} is a finite, bounded planar region with a cycle boundary and an empty interior.   A surface puncture in a piece of paper, the space bounded by a hoop and the space between the inner wall of a torus are examples of holes.   The interior of a hole contains no cells.   

A planar nerve in a CW complex is a collection of cells that have a common part.   Planar nerve complexes are examples of Edelsbrunner-Harer nerves.

\begin{definition}
Let $F$ be a finite collection of sets.   An \emph{\bf Edelsbrunner-Harer nerve}~\cite[\S III.2, p. 59]{Edelsbrunner1999} consists of all nonempty subcollections of $F$ (denoted by $\Nrv F$) whose sets have nonempty intersection, {\em i.e.},
\[
\Nrv F = \left\{X\subseteq F: \bigcap X\neq \emptyset\right\}. 
\]
\end{definition}

\begin{example}
Let cycles $\cyc A, \cyc B$ be represented in Fig.~\ref{fig:vortexCycles}(i) with a common part, namely, edge $\arc{4a,4b} = \arc{5a,5b}$.   Consequently,  $\cyc A \cap \cyc B\neq \emptyset$.   Hence, the collection $F = \left\{\cyc A, \cyc B\right\}$ is an Edelsbrunner-Harer nerve.    Notice that $F$ is also a collection a vortex nerves.
\qquad \textcolor{blue}{\Squaresteel}
\end{example}

\begin{definition}
Let $K$ be a finite collection of sub-complexes in a CW complex.   A \emph{\bf vortex nerve} consists of a nonempty collection $E$ of filled cycles $\cyc A$ in $K$ (denoted by $\vNrv E$) that have have nonempty intersection and which have zero or more edges attached between each pair of cycles in $\vNrv E$, {\em i.e.},
\[
\vNrv E = \left\{\cyc A\subseteq E: \bigcap \cyc A\neq \emptyset\right\}. \mbox{\qquad \textcolor{blue}{\Squaresteel}}
\]
\end{definition}

\begin{example}
A collection $\vNrv E$ of filled cycles = $
\left\{\cyc A, \cyc B, \cyc C, \cyc e_1, \cyc e_0\right\}$ is represented in Fig.~\ref{fig:vortexCycles}(ii).  
Each pair of cycles in this collection have nonempty intersection.   For instance,  $\cyc A \cap \cyc C\neq \emptyset$, since filled cycle $\cyc A$ is in the interior of the filled cycle $\cyc C$, {\em i.e.}, $\cyc A\in \Int(\cyc C)$.  Further, the intersection of all cycles in $\vNrv E$ is nonempty.  Hence, the collection $v\Nrv E$ of intersecting filled cycles is an Edelsbrunner-Harer nerve. 
\qquad \textcolor{blue}{\Squaresteel}
\end{example}

Holes define surface shapes.   A \emph{shape} is a finite, bounded planar region with a 1-cycle boundary and a nonempty interior.   By contrast, a \emph{hole} is a finite, bounded planar region with a 1-cycle boundary and an empty interior.  Planar holes are represented by bounded opaque regions.   A non-hole in the interior of a filled cycle is represented by a white planar region.   Typically, shapes such as bicycle tire (one hole) or a rabbit (many holes such as mouth, nostrils, ears) are defined by holes in their interiors.

\begin{example}
A massive planar hole is represented by the opaque region labelled $\cyc A$ in Fig.~\ref{fig:vortexCycles}(i).   A filled planar cycle containing a hole surrounding a non-hole region in its interior is represented by the opaque region labelled $\cyc A$ in Fig.~\ref{fig:vortexCycles}(ii).
\qquad \textcolor{blue}{\Squaresteel}
\end{example}

\section{Preliminaries}
This section briefly presents the notation, basic definitions and elementary lemmas and theorems for a descriptive proximity space.

%


A descriptive proximity space $X$ is defined in terms of a probe function $\Phi$ that maps each a nonempty subset $A$ in $X$ to a feature vector that describes $A$.   Let $2^X$ denote the collection of subsets in $X$.   
The mapping $\Phi:2^X\longrightarrow \mathbb{R}^n$ defined by
\[
\Phi(A) = \overbrace{\left(\mathbb{R}_1,\dots,\mathbb{R}_i,\dots,\mathbb{R}_n\right),}^{\mbox{\textcolor{blue}{\bf Feature vector that describes $A\in 2^X$ in Euclidean space $ \mathbb{R}^n$}}}
\]
where $\mathbb{R}_i$ is a real number.  For the axioms for a descriptive proximity, the usual set intersection $\cap$ for a traditional spatial proximity~\cite[\S 1, p. 7]{Naimpally70} is replaced by descriptive intersection $\dcap$~\cite[\S 3]{Peters2013mcs},~\cite{DiConcilio2018MCSdescriptiveProximities} (denoted by $\dcap$) defined by
\[
 A \dcap B = \{x \in A \cup B: \Phi(x) \in \Phi(A)\ \mbox{and}\ \ \Phi(x) \in \Phi(B) \}.
\]

If $A \dcap B$ is nonempty, there is at least one element of $A$ with a description that matches the description of an element of $B$.    It is entirely possible to identify a pair of nonempty sets $A, B\in 2^X$ separated spatially ({\em i.e.}, $A$ and $B$ have no members in common) and yet $A,B$ have matching descriptions.    The relation $\dnear$ reads \emph{descriptively close}.   We write $A\ \dnear\ B$, provided the feature vector that describes $A$ matches the feature vector that described $B$.
Let $X$ be equipped with the relation $\dnear$.   The pair $\left(X,\dnear\right)$ is a descriptive proximity space, provided the following axioms are satisfied.

\begin{description}
\item[{\rm\bf (dP0)}] $\emptyset\ \dfar\ A, \forall A \subset X$.
\item[{\rm\bf (dP1)}] $A\ \dnear\ B \Leftrightarrow B\ \dnear\ A$.
\item[{\rm\bf (dP2)}] $A\ \dcap\ B \neq \emptyset \Rightarrow\ A\ \dnear\ B$.
\item[{\rm\bf (dP3)}] $A\ \dnear\ (B \cup C) \Leftrightarrow A\ \dnear\ B $ or $A\ \dnear\ C$.
\end{description}

The converse of axiom {\bf dP2} also holds.

\begin{lemma}\label{lemma:dP2converse}
Let $X$ be equipped with the relation $\dnear$, $A,B\subset X$.   Then $A\ \dnear\ B$ implies $A\ \dcap\ B\neq \emptyset$.
\end{lemma}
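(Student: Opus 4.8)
The plan is to prove the contrapositive and lean on the definition of the descriptive intersection together with axiom (dP3). First I would unwind what $A\ \dnear\ B$ means: by definition of the relation $\dnear$, the feature vector describing $A$ matches (agrees with) the feature vector describing $B$, which forces some common described element — but making this precise is exactly where axiom (dP3) does the work. So instead of working directly from $A\ \dnear\ B$, I would argue as follows. Write $A\cup B = A\cup B$ and split $B$ (or rather the description of $B$) against $A$; the cleanest route is to show that if $A\ \dcap\ B = \emptyset$, then every point of $A\cup B$ fails the defining condition of $A\dcap B$, i.e. no $x\in A\cup B$ has $\Phi(x)\in\Phi(A)$ and $\Phi(x)\in\Phi(B)$ simultaneously. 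From this I want to conclude $A\ \dfar\ B$, which contradicts the hypothesis $A\ \dnear\ B$.

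The key steps, in order: (1) Assume toward a contradiction that $A\ \dnear\ B$ but $A\ \dcap\ B = \emptyset$. (2) Observe that $A\ \dnear\ B$ together with (dP1) and (dP3) gives us control over how the description of $B$ meets $A$: in particular, using (dP3) in the form $A\ \dnear\ (A\cup B)$ and then re-decomposing, one extracts an element whose description lies in both $\Phi(A)$ and $\Phi(B)$. (3) Such an element, by the very definition of $\dcap$, is a member of $A\dcap B$, so $A\dcap B\neq\emptyset$ — contradicting (2)'s assumption. (4) Hence $A\ \dnear\ B \Rightarrow A\dcap B\neq\emptyset$, which is the claim. Along the way I would record the small observation that $\Phi$ applied to a set is the set (or tuple) of feature values realized on that set, so ``$\Phi(x)\in\Phi(A)$'' is shorthand for ``$x$ has a feature value realized somewhere in $A$''; with that reading, $A\ \dnear\ B$ says precisely that some feature value is realized in both $A$ and $B$, and any point realizing it (in $A$ or in $B$) witnesses $A\dcap B\neq\emptyset$.

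I expect the main obstacle to be purely definitional rather than technical: pinning down, from the paper's somewhat informal phrasing ``the feature vector that describes $A$ matches the feature vector that describes $B$,'' the exact statement that there is a single point $x\in A\cup B$ with $\Phi(x)\in\Phi(A)\cap\Phi(B)$. Once that is granted — and it is essentially the intended meaning, reinforced by axiom (dP2) and its use of $\dcap$ — the rest is immediate from the definition of $\dcap$. If one prefers to avoid the contrapositive, the same argument runs directly: from $A\ \dnear\ B$ produce the witnessing point $x$, check $x\in A\cup B$ and $\Phi(x)\in\Phi(A)$, $\Phi(x)\in\Phi(B)$, and conclude $x\in A\dcap B$, so $A\dcap B\neq\emptyset$. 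Either way the proof is a two- or three-line affair once the reading of $\dnear$ is fixed.
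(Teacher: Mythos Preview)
Your final, direct version of the argument is correct and is exactly what the paper does: from $A\ \dnear\ B$ one reads off (by the definition of $\dnear$) the existence of some $E\subset A\cup B$ with $\Phi(E)\in\Phi(A)$ and $\Phi(E)\in\Phi(B)$, and this is precisely a witness that $A\ \dcap\ B\neq\emptyset$ by the definition of $\dcap$. That is the entire content of the paper's two-line proof.

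The earlier scaffolding you propose --- the contrapositive and the invocation of axiom (dP3) --- is unnecessary, and in fact the (dP3) step as you describe it does not do what you claim. Axiom (dP3) only says $A\ \dnear\ (B\cup C)\Leftrightarrow A\ \dnear\ B$ or $A\ \dnear\ C$; applying it to $A\ \dnear\ (A\cup B)$ merely returns $A\ \dnear\ A$ or $A\ \dnear\ B$, which gives you back your hypothesis and does not by itself ``extract an element whose description lies in both $\Phi(A)$ and $\Phi(B)$.'' The extraction of such an element comes solely from unwinding the definition of $\dnear$, exactly as you do in your last paragraph. So drop the (dP3) detour and present the direct argument; you have correctly identified that the only real issue is fixing the reading of $\dnear$, and once that is granted the lemma is immediate.
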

\begin{proof}
$A\ \dnear\ B$ implies that there is a subset $E\subset A \cup B$ such that $\Phi(E) \in  \Phi(A)$ and $\Phi(E) \in  \Phi(B)$.   Hence, by definition, $A\ \dcap\ B\neq \emptyset$.    
\end{proof}

\begin{theorem}\label{theorem:dP2result}
Let $K$ be a collection of planar cell complexes equipped with the proximity $\dnear$, $\cx A,\cx B\in K$.   Then $\cx A\ \dnear\ \cx B$ implies $\cx A\ \dcap\ \cx B\neq \emptyset$.
\end{theorem}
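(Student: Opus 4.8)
The plan is to obtain this as an immediate specialization of Lemma~\ref{lemma:dP2converse}. First I would note that every planar cell complex $\cx A\in K$ may be regarded as a point set in the ambient Hausdorff space $X$ carrying the descriptive proximity $\dnear$ — namely, the union of all 0-cells, 1-cells and 2-cells comprising $\cx A$ — so that $\cx A,\cx B\subset X$. Under this identification the relation $\dnear$ restricted to $K$ is exactly the relation $\dnear$ on $2^X$, because both $\dcap$ and $\dnear$ are defined purely in terms of the probe $\Phi$ and ordinary set-theoretic unions, and both of these are available for cell complexes viewed as point sets. Hence the hypothesis $\cx A\ \dnear\ \cx B$ is a bona fide instance of the antecedent of Lemma~\ref{lemma:dP2converse}.

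Applying Lemma~\ref{lemma:dP2converse} with $A\assign \cx A$ and $B\assign \cx B$ then yields a nonempty subset (in fact a subcomplex) $\cx E\subset \cx A\cup \cx B$ with $\Phi(\cx E)\in\Phi(\cx A)$ and $\Phi(\cx E)\in\Phi(\cx B)$. By the definition of descriptive intersection, the set
\[
\cx A\ \dcap\ \cx B = \{x\in \cx A\cup \cx B:\ \Phi(x)\in\Phi(\cx A)\ \mbox{and}\ \Phi(x)\in\Phi(\cx B)\}
\]
therefore contains the cells of $\cx E$, so $\cx A\ \dcap\ \cx B\neq\emptyset$, which is the desired conclusion. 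Note that no claim is made about the ordinary (spatial) intersection $\cx A\cap\cx B$; indeed $\cx A$ and $\cx B$ may be spatially disjoint while still being descriptively close, and it is precisely the role of $\dcap$ to record the common description witnessed by $\cx E$.

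I do not expect a genuine obstacle here: the only point requiring care is the bookkeeping observation that the descriptive proximity axioms (dP0)--(dP3), and consequently Lemma~\ref{lemma:dP2converse}, transfer verbatim from $(X,\dnear)$ to the subcollection $K$ of cell complexes. This is immediate once cell complexes are treated as their underlying point sets, so the theorem is essentially Lemma~\ref{lemma:dP2converse} read in the setting of planar cell complexes. If one prefers to avoid the identification with point sets altogether, one can instead re-run the short proof of Lemma~\ref{lemma:dP2converse} with $A,B$ replaced by $\cx A,\cx B$ throughout, using axiom (dP2) and the definition of $\dcap$ at the level of complexes; the argument is word-for-word the same.
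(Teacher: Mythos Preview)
Your proposal is correct and follows exactly the paper's approach: the paper's proof is the single line ``Immediate from Lemma~\ref{lemma:dP2converse}.'' Your additional bookkeeping about identifying cell complexes with their underlying point sets is a reasonable elaboration, but the core argument is identical.
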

\begin{proof}
Immediate from Lemma~\ref{lemma:dP2converse}. 
\end{proof}

\begin{corollary}\label{cor:dP2result}
Let $K$ be a collection of planar cell vortex nerves equipped with the proximity $\dnear$, $\vNrv A,\vNrv B\in K$.   Then $\vNrv\ \dnear\ \vNrv$ if and only if $\vNrv A\ \dcap\ \vNrv B\neq \emptyset$.
\end{corollary}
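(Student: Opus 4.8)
The plan is to observe that a vortex nerve $\vNrv A$ is, by its very definition, a planar cell complex in $K$: it is a nonempty collection of filled cycles $\cyc A$ together with zero or more edges attached between pairs of those cycles, and every constituent of it — vertices, edges, filled triangles — is a cell in the ambient CW complex. Hence the corollary is nothing more than the biconditional reading of Theorem~\ref{theorem:dP2result} (one direction) together with axiom \textbf{(dP2)} (the other direction), specialized to the cell complexes $\vNrv A$ and $\vNrv B$.

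First I would prove the forward implication. Assume $\vNrv A\ \dnear\ \vNrv B$. Since $\vNrv A$ and $\vNrv B$ are planar cell complexes in $K$, Theorem~\ref{theorem:dP2result} — itself obtained from Lemma~\ref{lemma:dP2converse} — applies directly and gives $\vNrv A\ \dcap\ \vNrv B\neq \emptyset$. Unwinding the argument for the record: $\vNrv A\ \dnear\ \vNrv B$ means there is a subset $E\subseteq \vNrv A\cup \vNrv B$ with $\Phi(E)\in \Phi(\vNrv A)$ and $\Phi(E)\in \Phi(\vNrv B)$, so by the definition of the descriptive intersection $\dcap$ the set $\vNrv A\ \dcap\ \vNrv B$ is nonempty.

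Next I would prove the converse implication, which is immediate from axiom \textbf{(dP2)}: if $\vNrv A\ \dcap\ \vNrv B\neq \emptyset$, then $\vNrv A\ \dnear\ \vNrv B$. Combining the two implications yields the claimed equivalence $\vNrv A\ \dnear\ \vNrv B$ if and only if $\vNrv A\ \dcap\ \vNrv B\neq \emptyset$.

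I do not expect any real obstacle here; the only point requiring a word of justification is the bookkeeping observation that a vortex nerve legitimately counts as a member of a collection of planar cell complexes, so that Theorem~\ref{theorem:dP2result} applies verbatim — and this is a direct consequence of the definition of $\vNrv E$. The corollary is thus a straightforward re-reading of the preceding theorem and axiom \textbf{(dP2)}.
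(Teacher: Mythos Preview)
Your proposal is correct and follows essentially the same approach as the paper: observe that a vortex nerve is a planar cell complex, then invoke Theorem~\ref{theorem:dP2result} for the forward direction and axiom \textbf{(dP2)} for the converse. The only difference is cosmetic --- you unwind the underlying Lemma~\ref{lemma:dP2converse} argument explicitly, whereas the paper simply cites the theorem and the axiom.
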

\begin{proof}$\mbox{}$\\
Each vortex nerve in $K$ is a planar cell complex.  Let $K$ be equipped with $\dnear$. \\
$\Rightarrow$:  From Theorem~\ref{theorem:dP2result}, $\vNrv A\ \dcap\ \vNrv B\neq \emptyset$.\\
$\Leftarrow$:  From Axiom dP2, $\vNrv A\ \dcap\ \vNrv B\neq \emptyset$ implies $\vNrv\ \dnear\ \vNrv$.
\end{proof}

\begin{lemma}\label{lemma:cellularProximitySpace}
Let $K$ be a collection of cell complexes equipped with the relation $\dnear$, cell complexes $\cx A,\cx B\subset K$.   Then $K$ is a descriptive proximity space.
\end{lemma}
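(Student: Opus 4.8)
The plan is to verify the four descriptive proximity axioms (dP0)--(dP3) directly for the pair $(K,\dnear)$, using the fact that cell complexes are ordinary nonempty subsets of the Hausdorff space carrying $K$, so the descriptive intersection $\dcap$ and the relation $\dnear$ are already meaningful on them. First I would fix the probe function $\Phi$ that describes each cell complex by a feature vector in $\mathbb{R}^n$ (e.g.\ built from the geometry of the cells), so that $\dnear$ and $\dcap$ on members of $K$ are the restrictions of the corresponding relations on $2^X$.

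Next I would dispatch the axioms in order. Axiom (dP0), $\emptyset\ \dfar\ \cx A$, is immediate since $\emptyset\dcap \cx A=\emptyset$ and there is no feature vector describing the empty set. Axiom (dP1), symmetry of $\dnear$, follows because the defining condition ``$\Phi$ of $\cx A$ matches $\Phi$ of $\cx B$'' is symmetric in $\cx A$ and $\cx B$; equivalently $\cx A\dcap \cx B=\cx B\dcap \cx A$ from the definition of $\dcap$. Axiom (dP2), $\cx A\dcap \cx B\neq\emptyset\Rightarrow \cx A\ \dnear\ \cx B$, holds because a common element in the descriptive intersection exhibits a shared feature vector, which is exactly what $\dnear$ asserts; here I can simply invoke the converse implication recorded in Lemma~\ref{lemma:dP2converse} and Theorem~\ref{theorem:dP2result} to get both directions if desired. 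Axiom (dP3), $\cx A\ \dnear\ (\cx B\cup \cx C)\Leftrightarrow \cx A\ \dnear\ \cx B$ or $\cx A\ \dnear\ \cx C$, follows from axiom (dP3) for the ambient descriptive proximity on $2^X$, once one checks that $\cx B\cup\cx C$ is again a cell complex in $K$ (it is a nonempty collection of cells attached in the Hausdorff space) and that $\Phi(\cx B\cup\cx C)$ is the concatenation/union of the feature vectors of $\cx B$ and $\cx C$, so a feature vector matching $\Phi(\cx B\cup\cx C)$ matches $\Phi(\cx B)$ or $\Phi(\cx C)$ and conversely.

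The one point requiring a little care --- and the main obstacle --- is the closure conditions: one must confirm that the set operations invoked in the axioms (in particular the union appearing in (dP3), and more subtly the descriptive intersection $\dcap$, which is built on $\cx A\cup \cx B$) do not take us outside $K$, or, if they can, that the axioms are still stated in a way that only references the relation $\dnear$ and not membership in $K$. Since the axioms (dP0)--(dP3) are relational statements about subsets of $X$ and $\dnear$ is defined on all of $2^X$, the cleanest route is to observe that $(2^X,\dnear)$ is a descriptive proximity space (this is the standard construction underlying the $\dcap$ definition given in the Preliminaries) and that $K\subseteq 2^X$; then $\dnear$ restricted to $K$ automatically inherits (dP0)--(dP3), because each axiom, read for members of $K$, is a special instance of the same axiom for subsets of $X$. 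I would state it that way to keep the proof short.

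\begin{proof}
Each cell complex in $K$ is a nonempty subset of the underlying Hausdorff space $X$, so $K\subseteq 2^X$, and the descriptive intersection $\dcap$ and the relation $\dnear$ on members of $K$ are the restrictions of the corresponding relation and operation on $2^X$ induced by the probe function $\Phi$. It therefore suffices to check the axioms. Axiom {\bf dP0} holds since $\emptyset\ \dcap\ \cx A=\emptyset$ for every $\cx A\in K$ and no feature vector describes $\emptyset$, whence $\emptyset\ \dfar\ \cx A$. Axiom {\bf dP1} holds because the relation $\Phi(E)\in\Phi(\cx A)$ and $\Phi(E)\in\Phi(\cx B)$ defining $\cx A\ \dnear\ \cx B$ is symmetric in $\cx A$ and $\cx B$. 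Axiom {\bf dP2} is immediate: if $\cx A\ \dcap\ \cx B\neq\emptyset$ then some $x\in \cx A\cup \cx B$ satisfies $\Phi(x)\in\Phi(\cx A)$ and $\Phi(x)\in\Phi(\cx B)$, so the feature vector of $\cx A$ matches that of $\cx B$, i.e. $\cx A\ \dnear\ \cx B$ (the converse implication being Theorem~\ref{theorem:dP2result}). For axiom {\bf dP3}, note that $\cx B\cup \cx C$ is again a nonempty collection of cells in $X$, hence a subset of $X$, and $\Phi(\cx B\cup \cx C)$ records the features of the cells of $\cx B$ together with those of $\cx C$. If $\cx A\ \dnear\ (\cx B\cup \cx C)$, there is $E\subseteq \cx A\cup \cx B\cup \cx C$ with $\Phi(E)\in\Phi(\cx A)$ and $\Phi(E)\in\Phi(\cx B\cup \cx C)$; the latter forces $\Phi(E)\in\Phi(\cx B)$ or $\Phi(E)\in\Phi(\cx C)$, so $\cx A\ \dnear\ \cx B$ or $\cx A\ \dnear\ \cx C$. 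Conversely, if $\cx A\ \dnear\ \cx B$ (resp. $\cx A\ \dnear\ \cx C$), the matching feature vector also lies in $\Phi(\cx B\cup \cx C)$, so $\cx A\ \dnear\ (\cx B\cup \cx C)$. Thus all four axioms hold, and $\left(K,\dnear\right)$ is a descriptive proximity space.
\end{proof}
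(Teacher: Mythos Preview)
Your proposal is correct and follows essentially the same approach as the paper: a direct verification of the four descriptive proximity axioms (dP0)--(dP3) for cell complexes in $K$, using the definitions of $\dnear$ and $\dcap$. The paper's proof is terser (it does not explicitly frame $K\subseteq 2^X$ or invoke Theorem~\ref{theorem:dP2result}), but the substance is identical.
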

\begin{proof}$\mbox{}$\\
Let $K$ be a collection cell complexes equipped with $\dnear$.   Then
(dP0): The empty set contains no cells.   Hence, $\emptyset\ \dfar\ \cx A, \forall\ \cx A \subset K$.\\
(dP1):  Assume $\cx A\ \dnear\ \cx B$.   Consequently, by definition of $\dnear$, $\cx A$ is descriptively close to $\cx B$, if and only if $\cx B$ is descriptively close to $\cx A$, if and only if $\cx B\ \dnear\ \cx A$.\\
(dP2):  Assume $\cx A\ \dcap\ \cx B \neq \emptyset$.   Consequently, there is at least one sub-complex of complex $\cx A$ with a description that matches the description of a sub-complex in complex $\cx B$, {\em i.e.}, $\cx A$ is descriptively close to $\cx B$.   Hence, $\cx A\ \dnear\ \cx B$.
(dP3): $\cx A\ \dnear\ (\cx B \cup \cx C)$ if and only if the feature vector $\Phi(\cx A)$ matches $\Phi(\cx B)$ or $\Phi(\cx C)$ (from the definition of $\dnear$ and $\cx B \cup \cx C$) if and only if $\cx A\ \dnear\ \cx B $ or $\cx A\ \dnear\ \cx C$.
Hence, $\left(K,\dnear\right)$ is a descriptive proximity space.
\end{proof}

\begin{theorem}\label{theorem:dP2vNRvResult}
Let $K$ be a collection of planar vortex nerves equipped with the proximity $\dnear$, $\cx A,\cx B\in K$.   Then $\left(K,\dnear\right)$ is a descriptive proximity space.
\end{theorem}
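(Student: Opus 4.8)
The plan is to reduce the claim to Lemma~\ref{lemma:cellularProximitySpace} by observing that every planar vortex nerve is, in particular, a planar cell complex. By definition a vortex nerve $\vNrv E$ is a nonempty collection of filled cycles $\cyc A$ together with zero or more edges attached between pairs of cycles, and each filled cycle is itself a finite collection of $0$-cells, $1$-cells and $2$-cells attached to one another in the Hausdorff plane; hence $\vNrv E$ is a nonempty collection of cells attached to one another, that is, a cell complex. Consequently $K$ is a collection of cell complexes equipped with the relation $\dnear$, and Lemma~\ref{lemma:cellularProximitySpace} immediately yields that $\left(K,\dnear\right)$ is a descriptive proximity space.

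If a self-contained argument is preferred, I would instead verify axioms (dP0)--(dP3) directly for $\vNrv A,\vNrv B,\vNrv C\in K$, following the template of the proof of Lemma~\ref{lemma:cellularProximitySpace}. For (dP0), a vortex nerve is a nonempty collection of filled cycles and therefore contains at least one cell, whereas $\emptyset$ contains none, so no feature vector of a cell in $\emptyset$ can match one in $\vNrv A$; thus $\emptyset\ \dfar\ \vNrv A$ for every $\vNrv A\subset K$. Axiom (dP1) is just the symmetry of the relation ``$\Phi(\vNrv A)$ matches $\Phi(\vNrv B)$'', which is immediate. For (dP2), if $\vNrv A\ \dcap\ \vNrv B\neq\emptyset$ then some cell (hence some subcycle) of $\vNrv A$ has a description matching that of a cell of $\vNrv B$, so $\vNrv A\ \dnear\ \vNrv B$; this is precisely the content recorded for vortex nerves in Corollary~\ref{cor:dP2result}. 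For (dP3), $\Phi(\vNrv A)$ matches $\Phi(\vNrv B\cup\vNrv C)$ if and only if it matches $\Phi(\vNrv B)$ or $\Phi(\vNrv C)$, which gives the required equivalence $\vNrv A\ \dnear\ (\vNrv B\cup\vNrv C)\Leftrightarrow \vNrv A\ \dnear\ \vNrv B$ or $\vNrv A\ \dnear\ \vNrv C$.

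The only point requiring a moment's care --- the ``main obstacle'', such as it is --- is checking that the class of objects on which $\dnear$ is being applied is closed under the set-theoretic operations appearing in the axioms, so that the verification is not vacuous: in (dP3) one meets the union $\vNrv B\cup\vNrv C$, and in (dP2) the descriptive intersection $\vNrv A\dcap\vNrv B$. Since a union of collections of filled cycles is again a collection of filled cycles, and $\dcap$ returns a set of cells contained in $\vNrv A\cup\vNrv B$, both stay inside the ambient CW complex, so the argument proceeds exactly as in Lemma~\ref{lemma:cellularProximitySpace}. Apart from this bookkeeping, the proof is routine.
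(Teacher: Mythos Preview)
Your proposal is correct and matches the paper's approach exactly: the paper's proof is simply ``Immediate from Lemma~\ref{lemma:cellularProximitySpace}.'' Your first paragraph supplies precisely that reduction (vortex nerves are cell complexes), and the optional direct verification you sketch just unwinds the lemma's own proof, so nothing new is needed.
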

\begin{proof}
Immediate from Lemma~\ref{lemma:cellularProximitySpace}. 
\end{proof}

\begin{example}
Recall that $\mathcal{B}_2$ is a Betti number that counts the number of holes in a complex in a CW space $K$~\cite{Zomorodian2001BettiNumbers}.  Let $\Phi(\vNrv X) = (\mathcal{B}_2 )$ be a feature vector that describes a cell complex in a CW space $K$.      Also, let the pair of cell complexes that are vortex nerves in $K$ be represented by $\vNrv E = \left\{\cyc A,\cyc B\right\}$ in Fig.~\ref{fig:vortexCycles}(i) and $\vNrv E' = \left\{\cyc A,\cyc B,\cyc C\right\}$ in Fig.~\ref{fig:vortexCycles}(ii).   From Lemma~\ref{lemma:cellularProximitySpace} and Theorem~\ref{theorem:dP2vNRvResult}, axioms {\bf dP0, dP1, dP3} are satisfied for the space $K$.   Also observe that of the vortex nerves $\vNrv E, \vNrv E'$ contain one hole represented by an opaque sub-complex in each nerve, namely, $\cyc A$ in Fig.~\ref{fig:vortexCycles}(i) and $\cyc A$ in Fig.~\ref{fig:vortexCycles}(ii), {\em i.e.}, 
\[
\overbrace{\Phi(\vNrv E) = \mathcal{B}_2(\vNrv E) = \Phi(\vNrv E') =  \mathcal{B}_2(\vNrv E') = 1.}^{\mbox{\textcolor{blue}{\bf Both nerve complexes have the same number of holes}}}
\]
Consequently, $\vNrv E\ \dcap\ \vNrv E' \neq \emptyset$.   Then, $\vNrv E\ \dnear\ \vNrv E'$ (axiom {\bf dP2}) if and only $\vNrv E\ \dnear\ \vNrv E'$ implies $\vNrv E\ \dcap\ \vNrv E' \neq \emptyset$ (from Corollary~\ref{cor:dP2result}).    Hence, $\left(K,\dnear\right)$ is a descriptive proximity space.
\qquad \textcolor{blue}{\Squaresteel}
\end{example}


\section{Main Results}
This section gives some main results for collections of proximal vortex cycles and proximal vortex nerves.

\subsection{Vortex, Vortex Nerve and Shape Betti numbers}
There are three basic types of Betti numbers, namely, $\mathcal{B}_0$ (number of cells in a complex),  $\mathcal{B}_1$ (number of cycles in a complex) and $\mathcal{B}_1$ (number of holes in a complex) ~\cite[\S 4.3.2, p. 57]{Zomorodian2001BettiNumbers}.   In terms of CW complexes, additional Betti numbers are useful, namely,$\mbox{}$\\
\begin{compactenum}[1$^o$]
\item [{\bf Vortex Betti number} $\boldsymbol{\mathcal{B}_{\mbox{\tiny \vtex}}}$ (number of vortex cycles) ] Let $\vtex E$ be a planar vortex, which is a collection of nesting, filled cycles.   For example, for a pair of nesting filled cycles $\cyc A \in \cyc B$, we have $\cyc A \in \Int(\cyc B)$.  Its Betti number is denoted by $\mathcal{B}_{\mbox{\tiny \vtex}}$.
\begin{example}
In Fig.~\ref{fig:vortexCycles}(ii), nesting cycles $\cyc A, \cyc \langle e\rangle, \cyc B, \cyc \langle e_0\rangle, \cyc C$ constitute a vortex and have $\mathcal{B}_{\mbox{\tiny \vtex}} = 5$.
\qquad \textcolor{blue}{\Squaresteel}
\end{example}
\item [{\bf Vortex nerve Betti number} $\boldsymbol{\mathcal{B}_{\mbox{\tiny \vNrv}}}$ (number of vortex nerve cells, cycles and holes)] Let $\vNrv E'$ be a planar vortex nerve.   Its Betti number is denoted by $\mathcal{B}_{\mbox{\tiny \vNrv}}$.$\mbox{}$\\
\vspace{2mm}
\item [{\bf Shape Betti number}  $\boldsymbol{\mathcal{B}_{\mbox{\tiny \sh}}}$ (number of shape cycles and holes)] Let $\sh E$ be a planar shape.   Its Betti number is denoted by $\mathcal{B}_{\mbox{\tiny \sh}}$.
\begin{example}
In Fig.~\ref{fig:vortexCycles}(ii), a shape $\sh E$ is defined by the vortex nerve $\vNrv E$, which contains 5 nesting cycles $\cyc A, \cyc \langle e_1\rangle, \cyc B, \cyc \langle e_0\rangle, \cyc C$.   Notice that cycles $\cyc \langle e_1\rangle, \cyc \langle e_0\rangle$ are bi-directional edges.   In addition, the interior of $\cyc A$ contains a hole (represented by the opaque sub-region in $\cyc A)$.  Hence, $\mathcal{B}_{\mbox{\tiny \sh}} = 2 + 5 + 1 = 8$.
\qquad \textcolor{blue}{\Squaresteel}
\end{example}
\end{compactenum}

\begin{lemma}\label{lemma:BettiNumbers}
Let $\mathcal{B}_{0},\mathcal{B}_{1},\mathcal{B}_{2}$ be Betti numbers that count the number of cells, number of cycles and number of holes  in a planar CW complex, respectively.   Then
\begin{compactenum}[1$^o$]
\item $\mathcal{B}_{\mbox{\tiny \vtex}} = \mathcal{B}_{1}$.
\item $\mathcal{B}_{\mbox{\tiny \vNrv}} = \mathcal{B}_{0} +\mathcal{B}_{1} + \mathcal{B}_{2}$.
\item $\mathcal{B}_{\mbox{\tiny \sh}} = \mathcal{B}_{1} + \mathcal{B}_{2}$ = 1 + $\mathcal{B}_{2}$ for a non-vortex nerve shape
\end{compactenum}
\end{lemma}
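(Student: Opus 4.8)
The plan is to derive all three identities directly from the definitions of $\mathcal{B}_{\vtex}$, $\mathcal{B}_{\vNrv}$ and $\mathcal{B}_{\sh}$ introduced above, together with the standard interpretation of $\mathcal{B}_0$ (cell count), $\mathcal{B}_1$ (cycle count) and $\mathcal{B}_2$ (hole count) for a planar CW complex. Each item is treated separately; the only work is to check that the features being counted on the two sides of each equation are in bijection, with no omission and no double counting.

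For item~1, recall that a planar vortex $\vtex E$ is a collection of nesting filled cycles $\cyc A$, i.e.\ a family with $\cyc A \in \Int(\cyc B)$ for consecutive members. Each filled cycle contributes exactly one $1$-cycle to the complex, namely its path-connected boundary $\bdy(\cyc A)$, and nesting neither creates nor destroys $1$-cycles, since an interior filled cycle retains its own boundary cycle. Hence the number of vortex cycles counted by $\mathcal{B}_{\vtex}$ coincides with the total number of $1$-cycles in the complex, which by definition is $\mathcal{B}_1$.

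For item~2, the vortex nerve Betti number $\mathcal{B}_{\vNrv}$ was defined to count the cells, the cycles and the holes of a planar vortex nerve $\vNrv E'$. These three families of features are pairwise disjoint: a cell is a $0$- or $1$- or $2$-cell counted by $\mathcal{B}_0$, a cycle is a $1$-cycle counted by $\mathcal{B}_1$, and a hole is a bounded region with empty interior counted by $\mathcal{B}_2$. A hole boundary is a $1$-cycle and is therefore already counted in $\mathcal{B}_1$, while the hole itself, i.e.\ the empty-interior region it bounds, is a genuinely distinct feature counted once in $\mathcal{B}_2$; so no contribution is counted twice. Summing over the three disjoint families gives $\mathcal{B}_{\vNrv} = \mathcal{B}_0 + \mathcal{B}_1 + \mathcal{B}_2$.

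For item~3, the shape Betti number $\mathcal{B}_{\sh}$ was defined to count the cycles and holes of a planar shape $\sh E$, which immediately yields $\mathcal{B}_{\sh} = \mathcal{B}_1 + \mathcal{B}_2$. To obtain the second equality, observe that a non-vortex-nerve shape is, by the definition of a planar shape, a bounded region covered by a single filled cycle with path-connected boundary $\bdy(\sh A)$; hence it carries exactly one $1$-cycle, so $\mathcal{B}_1 = 1$ and $\mathcal{B}_{\sh} = 1 + \mathcal{B}_2$. For a shape arising from a vortex nerve this fails, since the nested boundary cycles give $\mathcal{B}_1 > 1$, which is exactly what item~2 records. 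I expect this last point, namely pinning down $\mathcal{B}_1 = 1$ for a non-vortex shape, to be the only place where a modelling convention has to be invoked rather than read off a definition, and hence the main obstacle; the rest is pure bookkeeping.
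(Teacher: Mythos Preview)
Your proposal is correct and follows essentially the same approach as the paper: each item is obtained by unpacking the definition of the relevant new Betti number and matching the counted features to $\mathcal{B}_0,\mathcal{B}_1,\mathcal{B}_2$. The only minor difference is that in item~2 the paper reads $\mathcal{B}_0$ specifically as the count of edges attached between neighbouring nerve cycles, whereas you read it as the generic cell count; both readings are consistent with the definitions given and lead to the same identity.
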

\begin{proof}$\mbox{}$\\
1$^o$: By definition, a \emph{vortex} is a collection of nesting, non-concentric cycles.   Consequently, $\mathcal{B}_{\mbox{\tiny \vtex}}$ is a count of the number cycles in the vortex, which is the same as the oneth Betti number $\mathcal{B}_{1}$. \\
2$^o$: By definition, a \emph{vortex nerve} $\vNrv E$ is a collection of nesting, non-concentric connected cycles with common part.   At least one edge is attached between each pair of neighbouring cycles.   The innermost cycle of  $\vNrv E$ is either the boundary of a hole or the boundary of a nonempty planar region containing a hole.   It is also possible for one or more nerve cycles to be boundaries of  planar regions containing holes in addition to the possible hole in the innermost nerve cycle.   That is, each cycle in $\vNrv E$ can be the boundary of a planar region containing contain 0 or more holes.  Consequently, $\mathcal{B}_{\mbox{\tiny \vNrv}}$ is a count of the number of edges attached between nerve cycles plus the number of nerve cycles plus the number of nerve holes.   Hence, $\mathcal{B}_{\mbox{\tiny \vNrv}} = \mathcal{B}_{0} +\mathcal{B}_{1} + \mathcal{B}_{2}$.\\
3$^o$:  A planar shape $\sh E$ is defined in terms of its boundary and nonempty interior containing zero or more punctures (holes).  Since each shape has a single cycle on its boundary, $\mathcal{B}_{1}(\sh E)$ = 1.   Hence, $\mathcal{B}_{\mbox{\tiny \sh}} = 1 + \mathcal{B}_{2}$.
\end{proof}

\subsection{Topology on Vortex Cycle Spaces}
This section introduces the construction of topology classes of vortex cycles and vortex nerves.  Topology classes have proved to be useful in classifying physical objects such as quasi-crystals~\cite{Dareau2017arXivCrystalTopClasses} and in knowledge extraction~\cite{Fermi2018knowledgeExtraction}. Such classes provide a basis for knowledge extraction about proximal vortex cycles and nerves.   A strong beneficial side-effect of the construction of such classes is the ease with which the persistence of homology class objects can be computed (see, {\em e.g.},~\cite{Fermi2017arXivPersistentTopology},~\cite{Taimanov2017LNAIoilGasPersistence}).   More importantly, the construction of topology classes leads to problem size reduction (see, {\em e.g.},~\cite[\S 3.1, p. 5]{Pellikka2010computationalHomologyElectromagnetics}).

\begin{lemma}\label{lemma:vortexNerveParts}
Let $K$ be a nonempty collection of cell complexes $K$ that is a Hausdorff space with a Closure finite Weak (CW) topology.   Then
\begin{compactenum}[1$^o$]
\item A single 0-cell (vertex) in $K$ is a vortex nerve.
\item A single 1-cell (bi-directional edge)  in $K$ is a vortex nerve.
\item A single 2-cell (filled triangle with bi-directional, path-connected edges)  in $K$ is a vortex nerve.
\item A single path-connected cycle in $K$ is a vortex nerve.
\item Let $\cx E$ be a pair of filled cycles $\cyc A, \cyc B$ in $K$ such that $\cyc A \subset \Int(\cyc B)$ and $\cyc A\cap \cyc B\neq \emptyset$.  Then $\cx E$ is a vortex nerve.
\end{compactenum}
\end{lemma}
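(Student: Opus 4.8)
The plan is to check, case by case, that each object fits the definition of a vortex nerve given above: a nonempty collection $E$ of filled cycles in $K$ whose total intersection is nonempty and between whose pairs of cycles zero or more edges are attached. The organizing observation is that a \emph{one-element} family of filled cycles automatically satisfies the intersection requirement (the intersection of a single nonempty set is that set itself) and satisfies the edge condition vacuously, so for items $1^o$--$4^o$ it is enough to present the given cell or cycle as a (possibly degenerate) filled cycle, together with its closure in $K$.

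First I would dispose of $3^o$ and $4^o$, which are essentially immediate: a filled triangle is by definition a filled cycle (its boundary $\bdy$ is a $3$-cycle of path-connected edges and its interior is filled), and a single path-connected cycle is a filled cycle by the definition in the Introduction; in each case $\{\cx E\}$ is a nonempty one-element collection of filled cycles with nonempty total intersection, hence a vortex nerve. For $1^o$ and $2^o$ I would state and use a degenerate-cycle convention: a $0$-cell is the limiting case of a filled cycle whose bounding region has collapsed to a point, and a bi-directional $1$-cell is the limiting case of a filled cycle with empty interior whose boundary is a $2$-gon traversed in both directions; under this convention the same one-element-family argument applies. One must also record that the two CW conditions hold trivially: the closure $\cl(\cx E)$ of a single $0$-cell is that cell, of a single $1$-cell is the edge with its two endpoints, and of a single $2$-cell is the triangle with its three edges and vertices --- each a member of $K$ because $K$ carries a CW topology --- and the CW intersection condition is vacuous since only one bounded complex is involved.

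For $5^o$ the argument is the one already used for Edelsbrunner--Harer nerves: $\cx E = \{\cyc A, \cyc B\}$ is a two-element collection of filled cycles in $K$, and the hypotheses $\cyc A \subset \Int(\cyc B)$ and $\cyc A \cap \cyc B \neq \emptyset$ give $\bigcap\{\cyc A, \cyc B\} = \cyc A \cap \cyc B \neq \emptyset$; since zero edges between $\cyc A$ and $\cyc B$ is permitted, $\cx E$ meets the definition of $\vNrv E$. The CW conditions again hold because $\cyc A$, $\cyc B$, and $\cyc A\cap \cyc B$ all lie in $K$ by the CW intersection condition and the assumption that $K$ is a CW complex.

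I expect the only real friction to be definitional rather than mathematical: making explicit that the definition of $\vNrv E$ admits one-element collections and degenerate (collapsed) cycles, so that a bare vertex or edge legitimately counts as a vortex nerve. Once that convention is fixed, every part of the lemma reduces to reading off the definition, and no computation is required.
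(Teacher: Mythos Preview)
Your proposal is correct and follows essentially the same approach as the paper: verify item by item that each object is a (possibly degenerate) filled cycle, so that the singleton collection has nonempty intersection and hence is a vortex nerve, and for $5^o$ use the nesting hypothesis to get $\cyc A\cap\cyc B\neq\emptyset$. The paper proceeds in the listed order rather than doing $3^o,4^o$ first, and phrases the degenerate cases by directly arguing ``there is a path between the $0$-cell and itself'' and ``the $1$-cell is bi-directional'' rather than invoking a limiting-cycle convention, but the substance is identical.
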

\begin{proof}$\mbox{}$\\
1$^o$:  Let $\cx E\in K$ be a cell complex containing a single 0-cell $x$.   $\cl(\cx E) = \cl(x)\in K$ (containment property) and $\cx E\cap \cx E = \cx E\in K$ (intersection property).   By definition, $\cx E$ is a cycle, since there is a path between every pair of vertexes in $\cx E$, {\em i.e.}, path between 0-cell and itself.  Hence, $\cx E$ is a vortex nerve, since $\cx E\cap \cx E = \cx E\in K$. \\
2$^o$: Let $\cx E\in K$ be a cell complex containing a single 1-cell $\cx A = \arc{a,b}$.   $\cx A$ is a cycle, since there is a path from either vertex $a$ to $b$ or from $b$ to $a$, since $\cx A$ is bi-directional.   We also have $vNrv(\cx E) = \left\{\cx A\in \cx E: \bigcap \cx A = \cx A\neq \emptyset\right\}$.  Hence, edge $\cx A$ is a cycle and $vNrv(\cx E)$ is a vortex nerve.\\
3$^o$: Let $\cx E\in K$ be a cell complex containing a single 2-cell $\cx A = \bigtriangleup{A}$.  By definition, the boundary $\bdy(\bigtriangleup{A})$ is a cycle, since each pair of vertices on $\bigtriangleup{A}$ is path-connected.   Further, $vNrv(\cx E) = \left\{\bigtriangleup{A}\in \cx E: \bigcap \bigtriangleup{A} = \bigtriangleup{A}\neq \emptyset\right\}$, {\em i.e.}, $\bigtriangleup{A}$ intersects with itself.   Hence, $vNrv(\cx E)$ is a vortex nerve.\\
4$^o$: The proof that a path-connected cycle in $K$ is a vortex nerve is symmetric with the proof of part 3$^o$.\\
5$^o$:  Let $\cx E\in K$ be a cell complex containing a pair of nesting filled cycles $\cyc A, \cyc B$ in $K$ such that $\cyc A\in \Int(\cyc B)$.   We have
$vNrv(\cx E) = \left\{\cyc A\in \cx E: \bigcap \cyc A = \cyc B\neq \emptyset\right\}$, since $\cyc A$ is in the interior of $\cyc B$ and both cycles are filled.   That is, $\cyc A\subseteq \cyc B$.   Hence, $vNrv(\cx E)$ is a vortex nerve.
\end{proof}

\begin{theorem}\label{theorem:descriptiveCWtopology}
Let $K$ be a nonempty finite collection of planar vortex nerve complexes that is a Hausdorff space equipped the proximity $\dnear$.  From the pair $\left(K,\dnear\right)$, a Closure finite Weak (CW) topology can be constructed. 
\end{theorem}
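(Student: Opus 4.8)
The plan is to exhibit the required topology explicitly as the topology induced on $K$ by the descriptive proximity $\dnear$, and then to check that it satisfies the Alexandroff--Hopf--Whitehead containment and intersection conditions together with the (here automatic) closure-finiteness and weak-topology requirements. By Theorem~\ref{theorem:dP2vNRvResult} the pair $(K,\dnear)$ is already a descriptive proximity space, so axioms \textbf{dP0}--\textbf{dP3} and Lemma~\ref{lemma:dP2converse} are available from the outset.

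First I would build a closure operator on $K$ from $\dnear$. For a subcollection $\mathcal{A}\subseteq K$ put $\dcl(\mathcal{A}) = \{\cx B\in K : \cx B\ \dnear\ \mathcal{A}\}$. Axiom \textbf{dP0} gives $\dcl(\emptyset)=\emptyset$; reflexivity of $\dcap$ together with Theorem~\ref{theorem:dP2result} gives $\mathcal{A}\subseteq \dcl(\mathcal{A})$; axiom \textbf{dP3}, extended by induction to finite unions, gives additivity $\dcl(\mathcal{A}\cup\mathcal{B}) = \dcl(\mathcal{A})\cup \dcl(\mathcal{B})$; and idempotence $\dcl(\dcl(\mathcal{A})) = \dcl(\mathcal{A})$ follows from \textbf{dP1}, \textbf{dP3} and Lemma~\ref{lemma:dP2converse}, since $\cx B\ \dnear\ \dcl(\mathcal{A})$ forces an actual description-match between $\cx B$ and a member of $\mathcal{A}$. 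This yields a topology $\tau_{\Phi}$ on $K$ whose closed sets are the fixed points of $\dcl$.

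Next I would verify the CW structure of $(K,\tau_{\Phi})$. Since $K$ is a \emph{finite} collection and every planar vortex nerve in $K$ is built from finitely many $0$-, $1$- and $2$-cells, each closure $\cl(\cx E)$ of a complex $\cx E\in K$ is a finite union of cells; by Lemma~\ref{lemma:vortexNerveParts} every such cell, every path-connected cycle, and every nesting pair of filled cycles occurring in $\cx E$ is itself a vortex nerve, so $\cl(\cx E)\in K$, giving the CW containment condition. For the CW intersection condition, if $\cx A,\cx B\in K$ are bounded complexes with $\cx A\cap \cx B\neq\emptyset$, then $\cx A\cap \cx B$ is again a collection of intersecting filled cycles with possibly attached edges, hence an Edelsbrunner--Harer nerve and therefore a vortex nerve in $K$; Corollary~\ref{cor:dP2result} shows this is compatible with $\dnear$, namely $\cx A\ \dnear\ \cx B$ iff $\cx A\ \dcap\ \cx B\neq\emptyset$. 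Finiteness of $K$ makes the cover of $K$ by cells closure-finite and makes the weak-topology condition (a set is closed iff it meets the closure of every cell in a closed set) hold trivially, and $K$ is Hausdorff by hypothesis; hence $(K,\tau_{\Phi})$ is a CW complex.

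The step I expect to be the main obstacle is the idempotence of $\dcl$, i.e. that $\dcl$ is a genuine Kuratowski closure rather than merely monotone and additive, because descriptive closeness need not be transitive; the argument must go through Lemma~\ref{lemma:dP2converse} and \textbf{dP3} to convert $\cx B\ \dnear\ \dcl(\mathcal{A})$ into a description-match with an actual member of $\mathcal{A}$, so that $\cx B\in \dcl(\mathcal{A})$. Once this is settled, the remaining verifications reduce to routine bookkeeping owing to the finiteness of $K$ and Lemma~\ref{lemma:vortexNerveParts}.
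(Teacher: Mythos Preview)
Your approach is considerably more elaborate than the paper's and takes a genuinely different route.  The paper does \emph{not} construct a Kuratowski closure operator from $\dnear$ and does not verify any topological axioms for $\tau_{\Phi}$.  Instead, it simply invokes Theorem~\ref{theorem:dP2vNRvResult} to record that $(K,\dnear)$ is a descriptive proximity space, and then checks the two CW conditions directly: for containment it observes that $\cl(\vNrv A)$ is finite and that its boundary cycle is, by Lemma~\ref{lemma:vortexNerveParts}(4$^o$), itself a vortex nerve, hence $\cl(\vNrv A)\in K$; for intersection it argues that the \emph{descriptive} intersection $\vNrv A\ \dcap\ \vNrv B$ is a $0$-, $1$-, or $2$-cell, a $1$-cycle, or a connected sub-complex, and therefore a vortex nerve by Lemma~\ref{lemma:vortexNerveParts}.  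That is the whole proof.

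Two points of comparison are worth noting.  First, the paper uses the descriptive intersection $\dcap$, not the spatial intersection $\cap$, when verifying the CW intersection condition; you switch to $\cap$ and then appeal to Corollary~\ref{cor:dP2result} only for compatibility, which is a different emphasis.  Second, the obstacle you single out---idempotence of $\dcl$---is entirely an artifact of your chosen construction; it does not arise in the paper's argument because the paper never builds $\dcl$ or $\tau_{\Phi}$ at all.  Your route, if the idempotence step can be made rigorous, would yield a more explicit topology than the paper provides; the paper's route is shorter and sidesteps the transitivity issue for $\dnear$ altogether, at the cost of leaving the underlying topology implicit.
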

\begin{proof}$\mbox{}$\\
From Theorem~\ref{theorem:dP2vNRvResult}, $\left(K,\dnear\right)$ is a descriptive proximity space.  Let $\vNrv A, \vNrv B$ be vortex nerve complexes in $K$.  The closure $\cl(\vNrv A)$ is finite and includes the boundary $\bdy(\vNrv A)$ is a path-connected cycle in $\vNrv A$.   From Lemma~\ref{lemma:vortexNerveParts}(4), $\bdy(\vNrv A)$ is a vortex nerve.  Consequently, $\cl(\vNrv A)$ is in $K$ (CW containment property).

Since $K$ is finite, $\cl(\vNrv A)$ intersects a only a finite number of other vortex nerves in $K$.   The descriptive intersection $\vNrv A \dcap \vNrv B\neq \emptyset$ is itself is either either a 0-cell (vertex) or a 1-cell (bi-directional edge)of a 2-cell (filled triangle with bi-directional edges) or a 1-cycle or a sub-complex containing connected vertexes.   From Lemma~\ref{lemma:vortexNerveParts}, the nonempty intersection $\vNrv A \dcap \vNrv B$ is itself a vortex nerve (CW intersection property).   Hence, $\left(K,\dnear\right)$ defines a Whitehead CW topology.
\end{proof}

\begin{theorem}\label{thm:vortexSconnCWtopology}
Let $K$ be a nonempty collection of finite cell complexes $K$ that is a Hausdorff space equipped the proximity $\dnear$.  From the pair $\left(K,\dnear\right)$, a Closure finite Weak (CW) topology can be constructed. 
\end{theorem}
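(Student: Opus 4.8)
The plan is to mirror the argument of Theorem~\ref{theorem:descriptiveCWtopology}, but with arbitrary finite cell complexes in place of vortex nerve complexes, invoking the more general Lemma~\ref{lemma:cellularProximitySpace} instead of Theorem~\ref{theorem:dP2vNRvResult}. First I would observe that, by Lemma~\ref{lemma:cellularProximitySpace}, the pair $\left(K,\dnear\right)$ is a descriptive proximity space, so axioms {\bf dP0}--{\bf dP3} are available. It then remains to verify the two Alexandroff--Hopf--Whitehead conditions, namely CW containment and CW intersection, for the members of $K$.

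For CW containment, fix a cell complex $\cx A\in K$. Because $\cx A$ is finite, its closure $\cl(\cx A)$ --- the set of all cells on the boundary $\bdy(\cx A)$ together with all cells in the interior $\Int(\cx A)$ --- is again a finite collection of cells attached to one another, that is, a finite cell complex. The boundary cells are $0$-cells, $1$-cells, $2$-cells, or $1$-cycles which, by Lemma~\ref{lemma:vortexNerveParts}(1$^o$--4$^o$), are themselves admissible subcomplexes; hence $\cl(\cx A)\in K$. For CW intersection, take $\cx A,\cx B\in K$. If $\cx A\ \dcap\ \cx B=\emptyset$ there is nothing to prove; otherwise, by the definition of $\dcap$, the set $\cx A\ \dcap\ \cx B$ consists of cells drawn from $\cx A\cup \cx B$ whose descriptions are common to both, and --- exactly as in the proof of Theorem~\ref{theorem:descriptiveCWtopology} --- this set is a $0$-cell, a $1$-cell, a $2$-cell, a $1$-cycle, or a subcomplex of path-connected vertices. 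In each case Lemma~\ref{lemma:vortexNerveParts} shows it is an admissible cell complex, so $\cx A\ \dcap\ \cx B\in K$. Since $K$ is finite, only finitely many such intersections occur, so closure-finiteness is automatic. With both conditions verified together with the Hausdorff hypothesis, $\left(K,\dnear\right)$ defines a Whitehead CW topology.

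The main obstacle I anticipate is the CW intersection step: the descriptive intersection $\dcap$ is governed by matching feature vectors rather than genuine spatial overlap, so a priori $\cx A\ \dcap\ \cx B$ need not be a ``piece'' of either complex the way ordinary $\cap$ would be. One must argue carefully that the cells selected by $\dcap$ still form a legitimately attached (sub)complex --- that closure-finiteness and the Hausdorff separation property are inherited --- before Lemma~\ref{lemma:vortexNerveParts} can be applied. Making this precise (for instance by reading $\cx A\ \dcap\ \cx B$ as the subcomplex spanned by the descriptively matching cells, or by assuming $\Phi$ resolves cells finely enough) is where the real content lies; the descriptive-proximity axioms and the containment condition are routine bookkeeping once Lemma~\ref{lemma:cellularProximitySpace} is in hand.
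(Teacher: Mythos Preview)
Your outline mirrors the proof of Theorem~\ref{theorem:descriptiveCWtopology} using $\dcap$, but the paper's own proof is the one-line instruction ``Replace $\dcap$ with $\cap$ in the proof of Theorem~\ref{theorem:descriptiveCWtopology}.'' In other words, the paper runs the \emph{same} template but with ordinary spatial intersection in the CW intersection step, not descriptive intersection. This is precisely the move that dissolves the obstacle you flag in your final paragraph: with $\cap$, the intersection of two finite cell complexes in $K$ is automatically a genuine subcomplex (a $0$-cell, $1$-cell, $2$-cell, $1$-cycle, or path-connected subcomplex), and Lemma~\ref{lemma:vortexNerveParts} applies without any worry about whether feature-vector matching respects attachment.

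So your proof is not wrong in spirit, but it carries an unnecessary burden. The concern you raise about $\dcap$ --- that descriptively matching cells need not form an attached subcomplex --- is legitimate, and rather than resolving it the paper simply avoids it by switching to $\cap$. If you adopt that substitution, your argument collapses to the paper's: Lemma~\ref{lemma:cellularProximitySpace} gives the descriptive proximity space, closure-finiteness and containment go through as you wrote them, and the intersection condition becomes routine.
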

\begin{proof}$\mbox{}$\\
Replace $\dcap$ with $\cap$ in the proof of Theorem~\ref{theorem:descriptiveCWtopology} and the desired result follows.
\end{proof}

\begin{theorem}
Let $K$ be a nonempty finite collection of planar vortex nerve complexes that is a Hausdorff space, $\vNrv E\in K$.   Then
\begin{compactenum}[1$^o$]
\item If $\vNrv E$ is a collection of $k > 0$ intersecting filled cycles with zero attached edges between the cycles and no holes in the cycle interiors, then $\mathcal{B}_{\mbox{\tiny \vNrv}}(\vNrv E) = \mathcal{B}_{\mbox{\tiny \vtex}} = \mathcal{B}_{1} = k$.
\item  If $\vNrv E$ is a collection of $k > 0$ intersecting filled cycles with zero attached edges between the cycles and $n$ holes in the cycle interiors, then $\mathcal{B}_{\mbox{\tiny \vNrv}}(\vNrv E) = \mathcal{B}_{\mbox{\tiny \sh}} = \mathcal{B}_{1}+  \mathcal{B}_{2} = k + n$.
\item If $\vNrv E$ contains a single filled cycle with zero holes in its interior, then $\mathcal{B}_{\mbox{\tiny \vNrv}}(\vNrv E) = \mathcal{B}_{\mbox{\tiny \vtex}} = \mathcal{B}_{1} = 1$.
\end{compactenum}
\end{theorem}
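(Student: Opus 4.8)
The plan is to deduce all three statements directly from Lemma~\ref{lemma:BettiNumbers}, by reading off the values of the base Betti numbers $\mathcal{B}_0,\mathcal{B}_1,\mathcal{B}_2$ of $\vNrv E$ from the hypotheses and then substituting into the identities $\mathcal{B}_{\mbox{\tiny \vtex}} = \mathcal{B}_1$, $\mathcal{B}_{\mbox{\tiny \vNrv}} = \mathcal{B}_0 + \mathcal{B}_1 + \mathcal{B}_2$ and $\mathcal{B}_{\mbox{\tiny \sh}} = \mathcal{B}_1 + \mathcal{B}_2$. The first thing I would record is that, for a vortex nerve, the proof of Lemma~\ref{lemma:BettiNumbers}(2$^o$) reads the summand $\mathcal{B}_0$ as the number of edges attached between distinct nerve cycles, $\mathcal{B}_1$ as the number of nerve cycles, and $\mathcal{B}_2$ as the number of nerve holes. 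With this reading fixed, each case becomes a substitution.

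For 1$^o$ I would argue: ``zero attached edges between the cycles'' gives $\mathcal{B}_0(\vNrv E) = 0$; the hypothesis of $k$ intersecting filled cycles gives $\mathcal{B}_1(\vNrv E) = k$; ``no holes in the cycle interiors'' gives $\mathcal{B}_2(\vNrv E) = 0$. Lemma~\ref{lemma:BettiNumbers}(2$^o$) then gives $\mathcal{B}_{\mbox{\tiny \vNrv}}(\vNrv E) = 0 + k + 0 = k$, while Lemma~\ref{lemma:BettiNumbers}(1$^o$) gives $\mathcal{B}_{\mbox{\tiny \vtex}} = \mathcal{B}_1 = k$; chaining the equalities yields 1$^o$. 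For 2$^o$ the argument is identical except that now $\mathcal{B}_2(\vNrv E) = n$, so $\mathcal{B}_{\mbox{\tiny \vNrv}}(\vNrv E) = 0 + k + n$, which by the first equality of Lemma~\ref{lemma:BettiNumbers}(3$^o$) equals $\mathcal{B}_1 + \mathcal{B}_2 = \mathcal{B}_{\mbox{\tiny \sh}}$. Here I would be careful to invoke $\mathcal{B}_{\mbox{\tiny \sh}} = \mathcal{B}_1 + \mathcal{B}_2$ and not its specialization $1 + \mathcal{B}_2$, since the latter is asserted only for a non-vortex-nerve shape with a single boundary cycle, whereas $\vNrv E$ may carry $k$ nesting cycles. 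For 3$^o$ I would note that a single filled cycle forces $\mathcal{B}_1 = 1$, zero holes forces $\mathcal{B}_2 = 0$, and the presence of only one cycle forces $\mathcal{B}_0 = 0$ (no inter-cycle edges are possible); since by Lemma~\ref{lemma:vortexNerveParts}(4$^o$) a single path-connected cycle is a vortex nerve, the identities apply and deliver $\mathcal{B}_{\mbox{\tiny \vNrv}}(\vNrv E) = 0 + 1 + 0 = 1 = \mathcal{B}_{\mbox{\tiny \vtex}} = \mathcal{B}_1$.

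The only genuinely load-bearing step — and the one I expect to be the main obstacle — is the identification of the summand $\mathcal{B}_0$ in the vortex-nerve decomposition of Lemma~\ref{lemma:BettiNumbers} with the count of edges joining distinct nerve cycles rather than with the total $0$-cell count. Once that convention is made explicit, the phrase ``zero attached edges between the cycles'' is exactly what makes $\mathcal{B}_0$ vanish in 1$^o$ and 2$^o$, and having a unique cycle is what makes it vanish in 3$^o$; everything after that is substitution into Lemma~\ref{lemma:BettiNumbers} and routine bookkeeping.
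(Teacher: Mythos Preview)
Your proposal is correct and follows essentially the same route as the paper: the paper proves part~1$^o$ by invoking Lemma~\ref{lemma:BettiNumbers}(2$^o$) and substituting $\mathcal{B}_0=0$, $\mathcal{B}_1=k$, $\mathcal{B}_2=0$ in turn, then declares parts~2$^o$ and~3$^o$ similar. Your explicit identification of $\mathcal{B}_0$ with the inter-cycle edge count (drawn from the proof of Lemma~\ref{lemma:BettiNumbers}) and your care in~2$^o$ to use only the first equality of Lemma~\ref{lemma:BettiNumbers}(3$^o$) are exactly right.
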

\begin{proof}$\mbox{}$\\
1$^o$: From Lemma~\ref{lemma:BettiNumbers}(2$^o$), 
\begin{align*}
\mathcal{B}_{\mbox{\tiny \vNrv}}(\vNrv E) &= \mathcal{B}_{0} +\mathcal{B}_{1} + \mathcal{B}_{2}\\
   &= 0 +\mathcal{B}_{1} + \mathcal{B}_{2},\ \mbox{since $\vNrv E$ has zero edges between its cycles}\\
	 &= 0 +\mathcal{B}_{1} + 0,\ \mbox{since $\vNrv E$ has zero holes in its interior}\\
	 &= 0 + k + 0 = k,\ \mbox{since $\vNrv E$ contains $k$ intersecting cycles}\\
\end{align*}
2$^o$, 3$^o$:  The proofs parts 2$^o$ and 3$^o$ are similar to the proof of part 1$^o$.
\end{proof}


\subsection{Homotopic Types of Vortex Cycles and Vortex Nerves}

\begin{theorem}\label{EHnerve}{\rm ~\cite[\S III.2, p. 59]{Edelsbrunner1999}}
Let $\mathscr{F}$ be a finite collection of closed, convex sets in Euclidean space.  Then the nerve of $\mathscr{F}$ and the union of the sets in $\mathscr{F}$ have the same homotopy type.
\end{theorem}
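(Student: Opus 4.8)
The plan is to reduce the statement to the classical Nerve Lemma for a \emph{good cover}, the decisive point being that convexity is inherited by finite intersections. Write $\mathscr{F} = \{A_1,\dots,A_m\}$, put $X = \bigcup_{i} A_i$, and let $N = \Nrv\,\mathscr{F}$ with geometric realization $|N|$. First I would record the structural observation that for every simplex $\{A_{i_0},\dots,A_{i_k}\}$ of $N$ the set $A_{i_0}\cap\cdots\cap A_{i_k}$ is nonempty (by definition of the nerve) \emph{and} convex, hence contractible; likewise each $A_i$ is convex, hence contractible. Thus $\mathscr{F}$ is a good cover of $X$. In the setting of this paper every cell complex, cycle and shape is bounded, so each $A_i$ is in fact compact; replacing each $A_i$ by a sufficiently small open $\varepsilon$-neighbourhood (again convex, with nonempty finite intersections exactly where the originals had them, by compactness) does not change the combinatorics of the nerve, so I may assume without loss of generality that $\mathscr{F}$ is an open cover with the same $N$.

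Next I would construct the comparison map $f\colon X \to |N|$. Choosing a partition of unity $\{\varphi_i\}_{i=1}^m$ subordinate to the cover (so $\varphi_i \geq 0$, $\sum_i \varphi_i \equiv 1$ on $X$, and $\varphi_i(x) > 0$ forces $x \in A_i$), set $f(x) = \sum_{i} \varphi_i(x)\,e_i$, where $e_i$ is the vertex of $|N|$ named by $A_i$. For each $x$ the active indices $\{i : \varphi_i(x) > 0\}$ label sets all containing $x$, hence span a genuine simplex of $N$, so $f$ is well defined and continuous. The theorem then amounts to showing $f$ is a homotopy equivalence.

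Building the homotopy inverse is the heart of the argument, and I would do it by induction on $m$ via a Mayer--Vietoris / homotopy-pushout gluing. Write $U = A_1 \cup \cdots \cup A_{m-1}$ and $V = A_m$, so $X = U \cup V$ and $U \cap V = \bigcup_{i<m}(A_i \cap A_m)$. By the inductive hypothesis $U$ is homotopy equivalent to the sub-nerve $N_{<m}$ it induces, $V$ is contractible, and $U \cap V$ is again a union of convex sets whose nerve is the link of $e_m$ in $N$, hence by induction homotopy equivalent to that link. Since, after the thickening above, $\{U,V\}$ and the corresponding decomposition $\{|N_{<m}|,\ e_m * \mathrm{link}(e_m)\}$ of $|N|$ are pushouts along cofibrations, and the homotopy equivalences may be chosen to commute with the inclusions up to coherent homotopy, the gluing lemma for homotopy pushouts yields $X \simeq |N|$, realized by $f$. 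Equivalently, one can run the induction all at once by identifying $X$ with the homotopy colimit of the diagram $\sigma \mapsto \bigcap_{A_i \in \sigma} A_i$ over the face poset of $N$ and noting that, every value being contractible, this homotopy colimit collapses onto $|N|$.

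The main obstacle I anticipate is precisely the bookkeeping in the gluing step: arranging the homotopy equivalences on $U$, $V$ and $U\cap V$ so that they agree on the overlap, or only up to a coherent system of homotopies, and checking the cofibration/NDR conditions that make the homotopy pushout compute the genuine homotopy type; the thickening claim that the nerve is unchanged is routine here because the $A_i$ are compact. None of these steps is deep, but they are what turn the evident picture into a proof, and the convexity hypothesis enters exactly twice, to make every member of $\mathscr{F}$ and every nonempty finite intersection contractible.
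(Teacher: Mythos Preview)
The paper does not prove this theorem at all: it is quoted verbatim from Edelsbrunner--Harer with a citation and no accompanying proof, and is used only as a black box in the subsequent Theorem~\ref{thm:vortexNerveHomotopy}. So there is nothing in the paper to compare your argument against.

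That said, your outline is the standard route to the Nerve Lemma (convexity $\Rightarrow$ good cover $\Rightarrow$ partition-of-unity map $\Rightarrow$ homotopy-colimit or inductive gluing), and it is essentially the argument one finds in Edelsbrunner--Harer and in Hatcher. One small caution: your thickening step, which you justify by appealing to the boundedness assumptions ambient in this paper, really does need compactness. Two disjoint closed convex sets in $\mathbb{R}^n$ can be at distance zero (e.g.\ the lower half-plane and the closed epigraph of $y=1/x$ in the first quadrant), so an $\varepsilon$-thickening can change the nerve in the non-compact case; the theorem as stated in the paper does not literally assume boundedness, even if the intended applications do. If you want the statement in full generality you should either invoke a version of the Nerve Lemma valid for closed good covers directly (via paracompactness and a partition of unity supported on slightly larger open sets chosen set-by-set rather than by a uniform $\varepsilon$), or simply note that the Edelsbrunner--Harer source works in the compact setting.
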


\begin{theorem}\label{thm:vortexNerveHomotopy}
Let $K$ be a collection of closed, convex cell complexes $\cx A$ in Euclidean space.
Then 
\begin{compactenum}[1$^o$]
\item Each nerve complex $\Nrv E = \left\{\cx A\in K: \bigcap \cx A \neq \emptyset\right\}$ and the union of the cell sub-complexes in $\cx A$ in $\Nrv E$ have the same homotopy type.
\item Each vortex nerve $\vNrv E = \left\{\cyc A\in K: \bigcap \cx A \neq \emptyset\right\}$ and the union of the vortex nerve sub-cycles $\cyc A\in \vNrv E$ have the same homotopy type.
\end{compactenum}
\end{theorem}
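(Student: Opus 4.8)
The plan is to derive both statements directly from the Edelsbrunner--Harer Nerve Theorem (Theorem~\ref{EHnerve}), after checking that its hypotheses are met in each case. For part 1$^o$, the members of $K$ are, by assumption, closed and convex cell complexes in Euclidean space; taking $E\subseteq K$ finite (so that Theorem~\ref{EHnerve} applies), the collection $E$ is a finite family of closed convex sets, and $\Nrv E$ is precisely its Edelsbrunner--Harer nerve in the sense defined above. Since the intersection of any subfamily of convex sets is again convex, every nonempty finite intersection of members of $E$ is convex, hence contractible; this is exactly the ``good cover'' condition underlying the nerve theorem. Therefore Theorem~\ref{EHnerve} gives that $\Nrv E$ and $\bigcup\{\cx A:\cx A\in E\}$ have the same homotopy type, which is the assertion of 1$^o$.

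For part 2$^o$, I would first observe that a planar filled cycle $\cyc A$ is a closed, bounded planar region, and under the standing hypothesis that the cell complexes in $K$ are convex, each such $\cyc A$ is a closed convex subset of $\mathbb{R}^2$. As in part 1$^o$, arbitrary subfamily intersections of these filled cycles are closed and convex, so contractible whenever nonempty, and the collection is finite because $K$ is. Consequently the subcollection of filled cycles in $\vNrv E$ with common nonempty intersection satisfies the hypotheses of Theorem~\ref{EHnerve}, and its nerve --- which is $\vNrv E$ --- has the same homotopy type as the union $\bigcup\{\cyc A:\cyc A\in\vNrv E\}$ of the vortex nerve sub-cycles.

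One point that requires a short argument is the presence of the extra $1$-cells that a vortex nerve may carry between pairs of its cycles: these are not part of the bare Edelsbrunner--Harer nerve. However, nested filled convex cycles already overlap (the inner cycle lies in the interior of the outer one), so each such edge joins two points lying in the same contractible piece of the union; adjoining it creates no new $1$-cycle and no new hole, and hence does not change the homotopy type of the union, nor of the nerve, which remains a simplex. Thus the added edges do not affect the conclusion.

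The main obstacle --- really the only subtle point --- is verifying the good-cover condition, that is, that all nonempty finite intersections of the chosen sets are contractible; this is precisely where convexity (rather than mere closedness) of the cell complexes is indispensable, since the nerve theorem fails for closed sets with non-contractible intersections. A secondary bookkeeping issue is finiteness: Theorem~\ref{EHnerve} is stated for finite families, so one restricts to a finite subcollection $E$ in 1$^o$ and uses the finiteness of $K$ in 2$^o$. With these two points in hand, both parts follow immediately from Theorem~\ref{EHnerve}.
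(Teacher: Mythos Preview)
Your proposal is correct and follows essentially the same route as the paper: both parts reduce directly to Theorem~\ref{EHnerve}. The only difference is in part~2$^o$, where the paper first invokes Lemma~\ref{lemma:vortexNerveParts} (each cycle in $\vNrv E$ is itself a vortex nerve) before appealing to Theorem~\ref{EHnerve}, whereas you verify the closed-convex hypothesis directly and additionally argue that the attached edges between cycles do not alter the homotopy type --- a point the paper's proof does not address.
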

\begin{proof}$\mbox{}$\\
1$^o$: From Theorem~\ref{EHnerve}, we have that the union of the cell complexes $\cx A\in \vNrv E$ and $\vNrv E$ have the same homotopy type.\\
2$^o$: From Lemma~\ref{lemma:vortexNerveParts}, each cycle $\cyc A$ in a vortex nerve $\vNrv E$ is a vortex nerve.   Hence, from Theorem~\ref{EHnerve}, $\vNrv E$ and the union of cycles $\cyc A$ in $\vNrv E$ have the same homotopy type.
\end{proof}

\begin{theorem}\label{thm:dnearVortexNerveHomotopy}
Let $K$ be a collection of vortex nerves that are closed, convex complexes in Euclidean space equipped with $\dnear$.
Then the vortex nerve $\vNrv E = \left\{\cyc A\in K: \mathop{\bigcap}\limits_{\Phi} \cyc A \neq \emptyset\right\}$ and the union of the cycles $\cyc A$ in $\vNrv E$ have the same homotopy type.
\end{theorem}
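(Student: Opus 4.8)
The plan is to deduce the descriptive statement from the spatial one, Theorem~\ref{thm:vortexNerveHomotopy}(2$^o$), by showing that the proximity axioms make the descriptive nerve condition $\Dcap \cyc A\neq\emptyset$ behave exactly like ordinary intersection. First I would invoke Theorem~\ref{theorem:dP2vNRvResult}, so that $\left(K,\dnear\right)$ is a descriptive proximity space; then axiom {\bf dP2} together with Lemma~\ref{lemma:dP2converse} gives, for any filled cycles $\cyc A,\cyc B\in K$, the equivalence $\cyc A\ \dnear\ \cyc B$ if and only if $\cyc A\ \dcap\ \cyc B\neq\emptyset$ (this is Corollary~\ref{cor:dP2result} read at the level of cycles). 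Consequently a subcollection $\mathcal{G}\subseteq E$ furnishes a face of $\vNrv E$ precisely when the cycles of $\mathcal{G}$ carry a common description, i.e.\ when some cell $x\in\bigcup\mathcal{G}$ satisfies $\Phi(x)\in\Phi(\cyc A)$ for all $\cyc A\in\mathcal{G}$.

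Second, I would assemble the hypotheses needed to run a nerve-theorem argument. By assumption each $\cyc A\in\vNrv E$ is a closed, convex cell complex in Euclidean space and is therefore contractible; by Lemma~\ref{lemma:vortexNerveParts}(4$^o$) each such cycle is itself a vortex nerve, so $\{\cyc A:\cyc A\in\vNrv E\}$ is exactly the kind of finite family of closed convex sets to which the Edelsbrunner--Harer nerve theorem (Theorem~\ref{EHnerve}) is designed to apply, and the target space is the geometric union $\bigcup_{\cyc A\in\vNrv E}\cyc A$.

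The heart of the proof is to certify that swapping $\cap$ for $\dcap$ leaves the conclusion of Theorem~\ref{EHnerve} intact. I would argue that, for the closed convex cycles actually constituting $\vNrv E$, each descriptive intersection $\cyc A\dcap\cyc B$ --- and, more generally, each $\Dcap\mathcal{G}$ --- is nonempty (via the common-description cell found above) and carries the contractible-pieces data, so that the descriptive nerve of $\{\cyc A:\cyc A\in\vNrv E\}$ has the same face poset, and hence the same homotopy type, as the spatial nerve treated in Theorem~\ref{thm:vortexNerveHomotopy}(2$^o$). Granting this identification, Theorem~\ref{EHnerve} (equivalently, Theorem~\ref{thm:vortexNerveHomotopy}(2$^o$) with $\cap$ replaced by $\dcap$) yields that $\vNrv E$ and $\bigcup_{\cyc A\in\vNrv E}\cyc A$ have the same homotopy type.

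The step I expect to be the real obstacle is precisely this last one: a descriptive intersection $\Dcap\mathcal{G}$ of closed convex cells need not be convex, so its contractibility is not automatic, and a fully rigorous treatment would either (a) verify contractibility of each such descriptive intersection directly from the structure of the probe $\Phi$ on cells, or (b) --- cleaner --- isolate a stand-alone \emph{descriptive} Edelsbrunner--Harer lemma (a finite family of closed convex sets in $\mathbb{R}^n$ has the homotopy type of the nerve built from $\dcap$) and cite it here verbatim. The surrounding bookkeeping --- confirming that $\left(K,\dnear\right)$ is a descriptive proximity space and that every cycle in $\vNrv E$ is a vortex nerve --- is routine given the earlier lemmas.
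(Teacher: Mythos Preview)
Your approach is essentially the same as the paper's: the paper's entire proof is the single sentence ``Replace $\bigcap$ with $\mathop{\bigcap}\limits_{\Phi}$ in the proof of Theorem~\ref{thm:vortexNerveHomotopy}(2$^o$) and the desired result follows.'' Your scaffolding (invoking Theorem~\ref{theorem:dP2vNRvResult}, Corollary~\ref{cor:dP2result}, Lemma~\ref{lemma:vortexNerveParts}(4$^o$)) merely spells out what that one-line substitution is meant to rest on, and the obstacle you flag---that $\Dcap\mathcal{G}$ need not be convex, so Theorem~\ref{EHnerve} does not literally apply after the swap---is a concern the paper does not address.
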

\begin{proof}$\mbox{}$\\
Replace $\bigcap$ with $\mathop{\bigcap}\limits_{\Phi}$ in the proof of Theorem~\ref{thm:vortexNerveHomotopy}(2$^o$) and the desired result follows.
\end{proof}

\section*{Acknowledgements}
Many thanks to the anonymous reviewer and to Professor Tane Vergili for their helpful suggestions and corrections.

\bibliographystyle{amsplain}
\bibliography{NSrefs}

\providecommand{\bysame}{\leavevmode\hbox to3em{\hrulefill}\thinspace}
\providecommand{\MR}{\relax\ifhmode\unskip\space\fi MR }
\providecommand{\MRhref}[2]{%
  \href{http://www.ams.org/mathscinet-getitem?mr=#1}{#2}
}
\providecommand{\href}[2]{#2}
\begin{thebibliography}{10}

\bibitem{AlexandroffHopf1935Topologie}
P.~Alexandroff and H.~Hopf, \emph{Topologie. {B}and i}, Springer, Berlin, 1935,
  Zbl 13, 79; reprinted by the Chelsea Publishing Co., Bronx, N. Y., 1972.
  iii+637 pp., MR0345087.

\bibitem{Taimanov2017LNAIoilGasPersistence}
V.A. Baikov, R.R. Gilmanov, I.A. Taimanov, and A.A. Yakovlev, \emph{Topological
  characteristics of oil and gass reservoirs and their applications},
  Integrative Machine Learning, LNAI 10344 (A.~Halzinger et. al., ed.),
  Springer, Berlin, 2017, pp.~182--193.

\bibitem{DiConcilio2018MCSdescriptiveProximities}
A.~Di Concilio, C.~Guadagni, J.F. Peters, and S.~Ramanna, \emph{Descriptive
  proximities. properties and interplay between classical proximities and
  overlap}, Math. Comput. Sci. \textbf{12} (2018), no.~1, 91--106, MR3767897,
  Zbl 06972895.

\bibitem{Dareau2017arXivCrystalTopClasses}
A.~Dareau, E.~Levy, M.B. Aguilera, R.~Bouganne, E.~Akkermans, F.~Gerbier, and
  J.~Beugnon, \emph{Revealing the topology of quasicrystals with a diffraction
  experiment}, arXiv, Physical Review Letters \textbf{1607} (2017),
  no.~00901v2, 1--7, \url{doi.org/10.1103/PhysRevLett.119.215304}.

\bibitem{Edelsbrunner1999}
H.~Edelsbrunner and J.L. Harer, \emph{Computational topology. an introduction},
  Amer. Math. Soc., Providence, RI, 2010, {x}ii+241 pp. ISBN:
  978-0-8218-4925-5, MR2572029.

\bibitem{Fermi2017arXivPersistentTopology}
M.~Fermi, \emph{Persistent topology for natural data analysis - a survey},
  arXiv \textbf{1706} (2017), no.~00411v2, 1--18.

\bibitem{Fermi2018knowledgeExtraction}
\bysame, \emph{Why topology for machine learning and knowledge extraction},
  Machine Learning \& Knowledge Extraction \textbf{1} (2018), no.~6, 1--6,
  \url{https://doi.org/10.3390/make1010006}.

\bibitem{Leseberg2009FUboundedTopology}
D.~Leseberg, \emph{Bounded topology: {A} convenient foundation for topology},
  Tech. report, Freie Universit\"{a}t, Institut f\"{u}r Mathematik, FU, Berlin,
  2009,
  \url{https://publikationsserver.tu-braunschweig.de/receive/dbbs_mods_00029438}.

\bibitem{Leseberg2013QMboundedProximities}
\bysame, \emph{Bounded proximities and improved nearness}, Quaestiones
  Mathematicae \textbf{36} (2013), no.~3, 381--387, Zbl 06228812.

\bibitem{Leseberg2014QMimprovedNearness}
\bysame, \emph{Improved nearness research {I}{V}}, Quaestiones Mathematicae
  \textbf{37} (2014), no.~2, 249--263, Zbl 1397.54037,Zbl 1397.54038.

\bibitem{Naimpally2013}
S.A. Naimpally and J.F. Peters, \emph{Topology with applications. topological
  spaces via near and far}, World Scientific, Singapore, 2013, xv + 277 pp,
  Amer. Math. Soc. MR3075111.

\bibitem{Naimpally70}
S.A. Naimpally and B.D. Warrack, \emph{Proximity spaces}, Cambridge Tract in
  Mathematics No. 59, Cambridge University Press, Cambridge, UK, 1970, x+128
  pp.,Paperback (2008),MR0278261.

\bibitem{Pellikka2010computationalHomologyElectromagnetics}
M.~Pellikka, S.~Suuriniemi, and L.~Kettunen, \emph{Homology in electromagnetic
  boundary value problems}, Boundary Value Problems \textbf{2010} (2010),
  no.~381953, 1--18, \url{doi:10.1155/2010/381953}.

\bibitem{Peters2013mcs}
J.F. Peters, \emph{Local near sets: {P}attern discovery in proximity spaces},
  Math. in Comp. Sci. \textbf{7} (2013), no.~1, 87--106, DOI
  10.1007/s11786-013-0143-z, MR3043920, ZBL06156991.

\bibitem{Peters2018AMSBshapeComplexes}
\bysame, \emph{Proximal planar shapes. {C}orrespondence between triangulated
  shapes and nerve complexes}, Bull. Allahabad Math. Soc. \textbf{33} (2018),
  no.~1, 113--137, arXiv:1704.05909, MR3793556,Zbl 1400.54041: reviewed by D.
  Leseberg.

\bibitem{Whitehead1939homotopy}
J.H.C. Whitehead, \emph{Simplicial spaces, nuclei and m-groups}, Proceedings of
  the London Math. Soc. \textbf{45} (1939), 243--327.

\bibitem{Whitehead1949BAMS-CWtopology}
\bysame, \emph{Combinatorial homotopy. {I}}, Bulletin of the American
  Mathematical Society \textbf{55} (1949), no.~3, 213--245, Part 1.

\bibitem{Zomorodian2001BettiNumbers}
A.J. Zomorodian, \emph{Computing and comprehending topology persistence and
  hierarchical morse complexes}, Ph.D. thesis, University of Illinois at
  Urbana-Champaign, Graduate College, 2001, supervisor: H. Edelsbrunner, 199pp.

\end{thebibliography}

\end{document}